\newtheorem{theorem}{Theorem}[section]
\newtheorem{proposition}[theorem]{Proposition}
\newtheorem{corollary}[theorem]{Corollary}
\newtheorem{problem}[theorem]{Problem}
\newtheorem{example}[theorem]{Example}
\newenvironment{proof}{{\noindent \sc Proof. }}{\hfill $\Qed$\\}
\newcommand{\la}{\langle}
\newcommand{\ra}{\rangle}
\newcommand{\Qed}{\rule{2.5mm}{3mm}}
\newcommand{\Aut}{\hbox{{\rm Aut}}\,}
\renewcommand{\H}{\mathcal{H}}
\newcommand{\ZZ}{\mathbb{Z}}
\newcommand{\C}{{\cal{C}}}
\newcounter{case}
\renewcommand{\thecase}{\arabic{case}}
\newcounter{subcase}
\numberwithin{subcase}{case}
\begin{document}

	
	\begin{center}
		{\bf\Large  Infinite families of vertex-transitive graphs\\  
			with prescribed Hamilton compression} \\ [+4ex]
		Klavdija Kutnar{\small$^{a,b,}$}\footnotemark  \ 
		Dragan Maru\v si\v c{\small$^{a, b, c,*,}$}\footnotemark
		\ and \\
		\addtocounter{footnote}{0} 
		Andriaherimanana Sarobidy Razafimahatratra{\small$^{a,b}$} 
	\\ [+2ex]
	{\it \small 
		$^a$University of Primorska, UP IAM, Muzejski trg 2, 6000 Koper, Slovenia\\
		$^b$University of Primorska, UP FAMNIT, Glagolja\v ska 8, 6000 Koper, Slovenia\\
		$^c$IMFM, Jadranska 19, 1000 Ljubljana, Slovenia}
\end{center}

\addtocounter{footnote}{-1}
\footnotetext{The work of Klavdija Kutnar  is supported in part by the Slovenian Research Agency (research program P1-0285 and research projects N1-0062, J1-9110, J1-9186, J1-1695, J1-1715, N1-0140, J1-2451, J1-2481, N1-0209, J3-3001).}
\addtocounter{footnote}{1}
\footnotetext{
	The work of Dragan Maru\v si\v c is supported in part by the Slovenian Research Agency (I0-0035, research program P1-0285 and research projects N1-0062, J1-9108, J1-1695, N1-0140, J1-2451, J3-3001).

	~*Corresponding author e-mail:~dragan.marusic@upr.si}

\begin{abstract}
	Given a graph $X$ with a Hamilton cycle $C$, the 
	{\em compression factor $\kappa(X,C)$ of $C$} 
	is the order of the largest cyclic subgroup 
	of $\Aut(C)\cap\Aut(X)$, and the 
	{\em Hamilton compression $\kappa(X)$ of $X$ } 
	is the maximum of $\kappa(X,C)$
	where $C$ runs over all Hamilton cycles in $X$.
	Generalizing the well-known open problem regarding the existence of vertex-transitive graphs without Hamilton paths/cycles, it was asked by Gregor, Merino and M\"utze in
	[``The Hamilton compression of highly symmetric graphs'', {\em arXiv preprint} \href{https://arxiv.org/abs/2205.08126}{arXiv: 2205.08126v1} (2022)]
	whether for every positive integer $k$ there exists
	infinitely many vertex-transitive graphs (Cayley graphs) with Hamilton compression equal to $k$.
	Since  an infinite family of Cayley graphs with Hamilton compression equal to $1$ was given there, the question is completely resolved 
	in this paper in the case of Cayley graphs with a construction of 
	Cayley graphs of semidirect products $\ZZ_p\rtimes\ZZ_k$ where $p$ is a prime and $k \geq 2$ a divisor of $p-1$.
	Further, infinite families of non-Cayley vertex-transitive graphs with Hamilton compression equal to $1$ are given.
	All of these graphs being metacirculants, some additional results on Hamilton compression of metacirculants of specific orders are also given.
\end{abstract}

\begin{quotation}
	\noindent {\em Keywords:} 
	vertex-transitive graph, Cayley graph, Hamilton cycle, Hamilton compression
\end{quotation}

\begin{quotation}
	\noindent 
	{\em Math. Subj. Class.:} 05C25, 20B25.
\end{quotation}


\section{Introductory remarks}
\label{sec:intro}
\noindent

Following \cite{GMM22} we say that a Hamilton cycle 
$C=v_0,v_1,\ldots, v_{n-1},v_0$ in a graph $X$ is $k$-{\em symmetric} 
(and that it admits a rotational symmetry of order $k$) if
there exists an automorphism $\alpha$ of $X$ such that 
$\alpha(v_i)=v_{i+n/k}$ for each $i\in\ZZ_n$. 
This means that
$Aut(C)\cap Aut(X)$ contains a cyclic group of order $k$.
We refer to the maximum $k$ for which the Hamilton cycle $C$
of  $X$ is $k$-symmetric as the 
{\em compression factor of} $C$, denoted by $\kappa(X,C)$.

As observed in \cite{GMM22} there is a natural connection of
the compression factor to the so-called LCF notation for cubic
hamiltonian graphs, see \cite{F77}, which 
describes a cubic hamiltonian graph $X$ via 
one of its Hamilton cycles $C=v_0,v_1,\ldots, v_{n-1}$
through a sequence  $[d_0,d_1,\ldots,d_{n-1}]$,
where $d_i\colon={j-i\pmod n}$ with $v_j$ being the third neighbour
of $v_i$ (different from $v_{i\pm 1}$). Also,
$-n/2<d_i\le n/2$ and  $d_i\notin\{0,\pm 1\}$.
Frucht's suggestion \cite{F77} is to search for a Hamilton cycle $C$ in $X$
whose compression factor $\kappa(X,C)$ is as large as possible.
We define the Hamilton compression $\kappa(X)$ of $X$ to be 
$$
\kappa(X)\colon=\max\{\kappa(X,C) \mid C\textrm{ is a Hamilton cycle in} X\}.
$$
If $X$ has no Hamilton cycle, we let $\kappa(X):=0$.
Also, when $\kappa(X) =1$ we will say that $X$ has a trivial Hamilton compression.

Interestingly, the concept of rotational symmetry of Hamilton cycles
is directly linked to a widely used method for constructing Hamilton cycles
in connected vertex-transitive graphs, the so-called {\em Lifting Cycle Technique}.
This approach is based on quotienting  the graph with respect to the set of orbits of a suitable semiregular automorphism. Provided the quotient graph contains a Hamilton cycle it is sometimes possible to lift this cycle to construct a Hamilton cycle in the original graph, essentially, where this construction is made easier 
when the semiregular automorphism is of prime order.
Several partial results with regards to existence of Hamilton cycles in
connected vertex-transitive graphs have been obtained using this approach
(see \cite{A89,ALW90,AP82,DGMW98,DKM21,KW85,DM88,W82}).
Of course, the semiregular automorphism in this construction is precisely
the automorphim $\alpha$ of order $k$ above. 
To summarize, this method for constructing Hamilton cycles in connected
vertex-transitive graphs produces Hamilton cycles with non-trivial rotational symmetry, and thus allows for a more condensed description of the graph via its Hamilton compression.

Additionally, we would like to remark that asking for the existence of a semiregular automorphism in a vertex-transitive graph is in line with the conjecture that 
such automorphisms always exist in these graphs (see \cite{DM81,DM18}
for the original conjecture and the results obtained thus far).
In this sense, the problem of finding Hamilton cycles with rotational symmetry is a happy marriage of two long standing open problems  in algebraic graph theory, the Lov\'asz problem \cite{L70} on existence of Hamilton paths/cycles in vertex-transitive graphs and the above-mentioned semiregularity problem for vertex-transitive graphs.

In  \cite{GMM22}  
Hamilton compression of hypercubes, Johnson
graphs, permutahedra and Cayley graphs of abelian groups
have been either exactly determined or provided
close lower and upper bounds. Also, several intriguing 
open questions have been posed there, one of them being
(a direct quote):
``Are there infinitely many vertex-transitive graphs $X$
with $\kappa(X)=k$, for each fixed integer $k$?
A particularly relevant subclass of vertex-transitive
graphs are Cayley graphs, so we may ask the same
question about Cayley graphs \cite[p. 7]{GMM22}''.
The wording of the above question may suggest that 
a certain degree of difficulty is added by restricting oneself 
to Cayley graphs. Note, however, that Cayley graphs are conjectured \cite[Section~3.1]{MP97} to exhaust asymptotically the class of vertex-transitive graphs. It is therefore not surprising that often times  when dealing with vertex-transitive graphs, 
the difficulty lies in the ``non-Cayley'' versus ``Cayley'' 
side of the equation.
We would therefore like to make a small amend to the above problem by
shifting the emphasis somewhat with an additional ``non-Cayley'' requirement.

\begin{problem}
	\label{problem-infin}
	Given a positive integer $k$, are there infinitely many  vertex-transitive non-Cayley graphs $X$ with $\kappa(X)=k$, and 
	similarly, are there infinitely many Cayley graphs $X$ with $\kappa(X)=k$?
\end{problem}

In \cite{GMM22} an infinite family of Cayley graphs with
Hamilton compression equal to $1$ is provided.
The aim of this paper is to resolve Problem~\ref{problem-infin}
in the case of Cayley graphs for all other positive integers.

We construct in Section~\ref{sec:cay}
for each $k\ge 2$  (infinitely many) Cayley graphs 
of semidirect products $\ZZ_p\rtimes\ZZ_k$,
where $p$ is a prime and $k$ a divisor of $p-1$,
with Hamilton compression equal to $k$.
(These graphs fall into the class of the so called metacirculants,
see Section~\ref{sec:cay} for the definition).
As for non-Cayley graphs, we construct in Section~\ref{sec:non-cay}
two infinite families of non-Cayley metacirculants with Hamilton compression equal to $1$;
all of them characterized by the fact that the automorphism group 
contains a normal cyclic subgroup of prime order.
This leaves non-Cayley vertex-transitive graphs with Hamilton compression $k\ge 2$ as the only remaining case to deal with.
Furthermore, as a byproduct, 
in Section~\ref{sec:pq} some additional results on Hamilton compression of metacirculants of specific orders, notably  metacirculants of order a product of two distinct  primes,  are also given.


\section{Cayley graphs with prescribed Hamilton compression}
\label{sec:cay}
\noindent

In this section we give a construction of  infinite families of Cayley graphs with prescribed  Hamilton compression $k \geq 2$.
For this purpose we bring in a special class of vertex-transitive graphs,
the so-called metacirculants.

Let $m \ge 1$ and $n \ge 2$ be integers. 
An automorphism of a graph is called $(m,n)$-{\em semiregular} if it has 
$m$ orbits of length $n$ and no other orbit. 
We say that a graph $X$ is an $(m,n)$-{\em metacirculant} 
if there exists an $(m,n)$-semiregular automorphism $\rho$ of $X$, a rotation, together with an additional automorphism 
$\sigma$, a twisted rotation, of $X$ normalizing $\rho$, that is,
$$
\sigma\rho\sigma^{-1}=\rho^r \textrm{ for some } r\in\ZZ_n^*
$$
and cyclically permuting the orbits of $\rho$ in such a way that $\sigma^m$
fixes a vertex of $X$. For $m=1$ the graphs are circulants
and so we will from now on assume, unless specified otherwise, 
that $m\ge 2$. (Hereafter $\ZZ_n$ denotes the ring of residue classes modulo $n$ as well as the additive cyclic group of order $n$, 
depending on the context.) Note that this implies that $\sigma^m$ 
fixes a vertex in every orbit of $\rho$. To stress the role of these two automorphisms in the definition of the metacirculant $X$ we shall say 
that $X$ is an $(m,n)$-metacirculant relative to the ordered pair 
$(\rho,\sigma)$. A graph $X$ is a {\em metacirculant} if it is an $(m,n)$-metacirculant for some $m$ and $n$. This definition is equivalent with the original definition of a metacirculant by Alspach and Parsons (see \cite{AP82}). 
The abstract group isomorphic to the group $G=\la\rho,\sigma\ra$
is a semidirect product of two cyclic groups.
As a starting point for an explicit construction of such graphs we identify the set $V(m,n)$ on which a semidirect product $G=\la\rho,\sigma\ra$, of two cyclic groups, acts as  follows:

\begin{equation}
	\label{eq:vertex}
	V(m,n) =\{v_i^j\colon i\in\ZZ_m, j\in\ZZ_n\}.
\end{equation}
\noindent
Let  $r\in\ZZ_n^*$ and define the two permutations $\rho$ and $\sigma$ on $V$ by the rules

\begin{equation}
	\label{eq:rho}
	\rho \colon v_i^j\mapsto v_i^{j+1}, i\in\ZZ_m, j\in\ZZ_n,
\end{equation}

\noindent
and 

\begin{equation}
	\label{eq:sigma}
	\sigma  \colon v_i^j\mapsto v_{i+1}^{rj}, i\in\ZZ_m, j\in\ZZ_n.
\end{equation}

\medskip
\noindent
Following  \cite{AP82} we can now construct a graph admitting 
a transitive action  of $G$ on the vertex set $V$  by specifying the neighbours' set  of vertex $v_0^0$, all the remaining adjacencies  follow   from  the transitive action of the group $G$. 
Note that in \cite{AP82} various 
necessary and sufficient conditions are given for  a metacirculant to be a Cayley graph (or a non-Cayley graph). 
Two  special instances  will be of use here.
First, if we require that $\sigma$ be of order $m$, and that also $r$ is of order $m$ in $\ZZ_n^*$, then the corresponding graph is a Cayley graph \cite[Theorem 9]{AP82}. 
As we prove below, infinitely many Cayley graphs with prescribed Hamilton compression $k\geq2$ exist among these graphs. 
And second,  for $p$ and $q$ primes such that 
$p\equiv {1\pmod k}$,
a non-Cayley  $(q,p)$- metacirculant can be constructed
using \cite[Corollary 13]{AP82}, whose Hamilton compression equals 1, as we shall see in Section~\ref{sec:non-cay}

With $m, n$ and $r$ as above, we now define the graph $X(m,n;r)$ to have vertex set $V(m,n)$ and  edge set arising from the adjacencies:
$$
v_i^j \sim v_i^{j+r^i}
{\rm and}  \,\,\,
v_i^j \sim v_{i+1}^{j}
$$
\noindent
for all $i \in \ZZ_m$ and $j \in \ZZ_n$,
with the additional requirement for $r$ to have order $m$  in $\ZZ_p^*$.
Observe that   $X(m,n;r)$ has valency $4$ when
$m\ge 3$ while $X(2,n;r)$ is the $n$-prism (and so a cubic graph).

\medskip

The next proposition gives a lower bound for 
Hamilton compression of $X(m,n;r)$.

\begin{proposition}
	\label{pro:twist}
	Let $m\ge 2$ and $n$ be integers and
	$r \neq 1$ of order $m$ in $\ZZ_n^*$ 
	such that $r-1 \in \ZZ_n^*$ (is coprime with $n$).
	Then we have
	\begin{enumerate}[(i)]
		\itemsep=0pt
		\item if $m\ge 3$ then $\kappa(X(m,n;r)) \geq m$;
		\item if $n$ is odd then $\kappa(X(2,n;r)) \geq 2$;
		\item if $n$ is even then $\kappa(X(2,n;r)) \geq \frac{n}{2}$.
	\end{enumerate}
\end{proposition}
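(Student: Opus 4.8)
The plan is to prove each lower bound by exhibiting a Hamilton cycle $C$ together with an automorphism $\alpha\in\Aut(X(m,n;r))$ of the prescribed order $k$ that rotates $C$ by $mn/k$ positions; by the definition of compression this immediately gives $\kappa(X(m,n;r))\ge k$. First I would record the layered structure of $X(m,n;r)$: for each fixed $i$ the vertices $\{v_i^j : j\in\ZZ_n\}$ induce an $n$-cycle (``layer $i$'') with step $r^i$, and consecutive layers are joined by the superscript-preserving perfect matchings $v_i^j\sim v_{i+1}^j$. I would also isolate the two symmetries I intend to use: $\rho$ has order $n$, while $\sigma$ is $(n,m)$-semiregular, its orbits being the sets $\{v_i^{r^ij}: i\in\ZZ_m\}$ indexed by $j\in\ZZ_n$, and it has order exactly $m$ because $r$ has order $m$ in $\ZZ_n^*$.

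For parts (i) and (ii) I would take $\alpha=\sigma$. Since $\sigma$ has $n$ orbits each of size $m$, a $\sigma$-invariant Hamilton cycle (rotated by $mn/m=n$) is obtained from a single path $P=(u_0,\dots,u_{n-1})$ provided $\{u_0,\dots,u_{n-1}\}$ is a transversal of the $\sigma$-orbits and the single adjacency $u_{n-1}\sim\sigma(u_0)$ holds: the cycle is then $P,\sigma(P),\dots,\sigma^{m-1}(P)$, and every remaining block-to-block transition, including the wrap-around (which uses $\sigma^m=\mathrm{id}$), is a $\sigma$-image of that one adjacency. I would take $P$ to be a Hamilton path of layer $0$, which is automatically a transversal because each $\sigma$-orbit meets layer $0$ exactly once. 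Its endpoints are adjacent in the step-$1$ cycle of layer $0$, so $P$ runs from $v_0^{j_0}$ to $v_0^{j_0\pm1}$; since $\sigma(v_0^{j_0})=v_1^{rj_0}$ has matching neighbour $v_0^{rj_0}$ in layer $0$, the closure condition $u_{n-1}\sim\sigma(u_0)$ becomes $rj_0\equiv j_0\pm1$, i.e. $(r-1)j_0\equiv\pm1\pmod n$. This is solvable precisely because $r-1\in\ZZ_n^*$, yielding $\kappa\ge m$. For $m=2$ the hypotheses $r^2\equiv1$, $r\neq1$ and $\gcd(r-1,n)=1$ force $n\mid r+1$, hence $r=-1$ and $n$ odd, which is exactly case (ii).

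For part (iii), with $m=2$ and $n$ even, the graph $X(2,n;r)$ is the $n$-prism and $\sigma$ has order only $2$, so instead I would use $\alpha=\rho^2$, of order $n/2$, which must rotate a Hamilton cycle on $2n$ vertices by $4$ positions. The four orbits of $\rho^2$ are the parity classes $\{v_i^{j}: j\text{ even}\}$ and $\{v_i^{j}: j\text{ odd}\}$ for $i\in\{0,1\}$, so a fundamental segment consists of one vertex from each. I would take the zig-zag segment $v_0^0,v_1^0,v_1^1,v_0^1$ (a rung, a layer-$1$ edge, a rung), check that it meets each parity class once, and verify the single closure adjacency $v_0^1\sim\rho^2(v_0^0)=v_0^2$ in layer $0$; propagating by $\langle\rho^2\rangle$ then yields the usual zig-zag Hamilton cycle rotated by $4$, giving $\kappa\ge n/2$.

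The routine part is the adjacency checks and the transversality of the fundamental segment; the genuine content is the reformulation that collapses all $m$ (resp. $n/2$) block-to-block transitions into a single adjacency via $\sigma^m=\mathrm{id}$ (resp. $(\rho^2)^{n/2}=\mathrm{id}$). I expect the main obstacle to be the coordinate bookkeeping caused by the twisted step $r^i$ in layer $i$: one must confirm that the $\sigma$-translates of the layer-$0$ Hamilton path are indeed Hamilton paths of the successive layers, which holds because $\sigma$ carries the step-$r^i$ cycle of layer $i$ onto the step-$r^{i+1}$ cycle of layer $i+1$; and, in (iii), that taking $r=-1$ makes layer $1$ a genuine step-$1$ cycle, so that the prism zig-zag is actually available.
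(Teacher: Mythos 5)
Your proposal is correct and takes essentially the same approach as the paper: the paper also realizes the symmetry by $\sigma$ in parts (i)--(ii) (its auxiliary map $\tau\colon v_i^j\mapsto v_{i+1}^{n-j}$ is exactly $\sigma$ once the hypotheses force $r=-1$) and by $\rho^2$ in part (iii), and its key step is the same solvability computation $(r-1)j\equiv\pm 1\pmod n$ (appearing there as the double-edge locations $j=r/(1-r)$ and $j=1/(r-1)$). The only difference is presentational --- the paper phrases the construction as quotienting by $\sigma$ and lifting via a double edge whose voltages differ by a generator of $\ZZ_m$, whereas you build the $\sigma$-invariant cycle directly from a fundamental transversal path --- and your version in fact supplies the explicit details for (ii) and (iii) that the paper dismisses with ``it may be seen''.
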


\begin{proof}
	Letting $X=X(m,n;r)$, we have 
	$V(X) = V(m,n)$  defined in (\ref{eq:vertex})
	and the automorphisms $\rho$ and $\sigma$ mapping 
	according to the rules given in 
	(\ref{eq:rho}) and (\ref{eq:sigma}).
	
	To show that $\kappa(X) \geq m$  we just need to construct a Hamilton cycle in $X$  with compression factor $m$.
	Suppose first that $m=2$.
	Then $X$ is the $n$-prism and the permutation $\tau$, 
	mapping according to the rule

	\begin{equation}
		\label{eq:tau}
		\tau  \colon v_i^j\mapsto v_{i+1}^{n-j}, i\in\ZZ_m, j\in\ZZ_n.
	\end{equation}
	
	\noindent
	is an automorphism of $X$.
	Consider the quotient $X_\tau$. It may be seen that for  
	$n$ odd we get a liftable Hamilton  cycle $X_\tau$
	and so $\kappa(X(2,n;r)) \geq 2$.
	For $n$ even, however,  we cannot find such a cycle and so in this case
	we need to  consider the quotient $X_{\rho^2}$. It may be seen that
	a liftable Hamilton cycle exist in this case so that 
	$\kappa(X(2,n;r)) \geq \frac{n}{2}$ in this case.
	
	Suppose now that $m\ge 3$.
	Then a Hamilton cycle in $X$ is obtained by taking the quotient
	$X_{\sigma}$ of $X$ with respect to the automorphism $\sigma$
	whose orbits are
	$$
	V_j = \{v_0^{j},v_1^{rj}, \cdots, v_{m-1}^{r^{m-1}j} \} =
	\{v_i^{r^ij}\mid i\in\ZZ_m\},\ j\in\ZZ_n.
	$$
	Since each  for each $j \in \ZZ_n$, the vertex $v_0^j$ is adjacent to $v_0^{j+1}$ we  have that
	$V_j \sim V_{j+1}$  for each $j \in \ZZ_n$, 
	giving rise to a  Hamilton cycle
	$$
	C = V_0,V_1,\ldots,V_{n-1}, V_0
	$$ 
	in $X_{\sigma}$.
	As it turns out, $C$ lifts to a full Hamilton cycle
	in $X$. For this purpose we first show  that at least one of the edges $V_jV_{j+1}$, $j\in\ZZ_n$, is a multiple edge.
	In fact, as  we prove below, exactly two edges in $C$ are double edges, sufficing as we shall see, to lift $C$ to a Hamilton cycle in $X$.
	
	Consider the vertex $v_0^j\in V_j$. It has a neighbour $v_0^{j+1}$ in $V_{j+1}$. Furthermore,  $v_1^j$ and $v_{m-1}^j$ are also neighbours of $v_0^j$. But  $V_{j+1}$ contains the vertex
	$v_1^{r(j+1)}$ (with subscript $1$)
	and the vertex  $v_{m-1}^{r^{m-1}(j+1)}$ (with subscript $m-1$).
	If either of these two pairs of vertices coincide, that is, if
	$v_1^j = v_1^{r(j+1)}$ or  if
	$v_{{m-1}}^j = v_{m-1}^{r^{m-1}(j+1)}$, then $V_jV_{j+1}$ is a double edge
	in $C$. Therefore, we need to have either 
	$$
	j = r(j+1) \textrm{ or }  j = r^{m-1}(j+1).
	$$
	The two respective solutions are $j =  r/(1-r)$ and 
	$j =  1/(r-1)$. 
	(Recall that by assumption, $r-1\in\ZZ_n^*$.)
	Hence, for these two values of $j$ we have a double edge between the orbits $V_j$ and $V_{j+1}$.  
	To be more precise, in the first case for example, it follows  that  both $v_0^{j+1}$ and $v_1^{r(j+1)} = \sigma(v_0^{j+1})$ are  neighbours of $v_0^j$. This means that ``the voltages'' between $V_j$ and $V_{j+1}$
	differ by $1$ (a generator of $\ZZ_m$) and so $C$ 
	lifts to a full Hamilton cycle in $X$, as required.
\end{proof}

We adopt the following notation.
Given an $(m,n)$-metacirculant $X$
with an $(m,n)$-semiregular automorphism $\rho$ we 
let  $\tilde X(\rho)$ denote the subgraph obtained from $X$ 
by removing all the edges joining two vertices from the same orbit of 
$\rho$.

We  give below an infinite family of Cayley graphs
with a prescribed Hamilton compression $k\geq2$.

\begin{theorem}
	\label{the:family}
	Let $k\geq2$ be a positive integer and $p$ a prime such that
	$p\equiv {1\pmod k}$
	and let $r \in \ZZ_p^*$ have order $k$. 
	Then $\kappa(X(k,p;r)) = k$.
\end{theorem}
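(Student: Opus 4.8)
The lower bound $\kappa(X(k,p;r))\ge k$ I would read off directly from Proposition~\ref{pro:twist}: its hypotheses hold because $r$ has order $k=m$ in $\ZZ_p^*$, and $r-1\in\ZZ_p^*$ since $p$ is prime and $r\neq 1$; for $k\ge 3$ this is part~(i), while for $k=2$ the prime $p\equiv 1\pmod 2$ is odd and part~(ii) gives $\kappa\ge 2$. So the real content is the upper bound $\kappa(X)\le k$. Write $X=X(k,p;r)$, $A=\Aut(X)$, $N=|V(X)|=kp$, and let $\mathcal F=\{F_0,\dots,F_{k-1}\}$ be the partition into the $\rho$-orbits (fibres) $F_i=\{v_i^j:j\in\ZZ_p\}$. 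Each fibre induces the $p$-cycle $v_i^j\sim v_i^{j+r^i}$, the inter-fibre edges are exactly the edges $v_i^j\sim v_{i+1}^j$ constituting $\tilde X(\rho)$, and the combinatorial fact I will exploit is that $\tilde X(\rho)$ is \emph{disconnected} (it is $p$ disjoint $k$-cycles for $k\ge 3$, and a perfect matching of $p$ edges for $k=2$). A compression factor $\ell=\kappa(X,C)$ is realised by a semiregular $\alpha\in A$ of order $\ell$ rotating a Hamilton cycle $C=w_0,\dots,w_{N-1}$ by $N/\ell$. Since $\gcd(k,p)=1$ (as $k\mid p-1<p$), every divisor of $kp$ exceeding $k$ is a multiple of $p$, so it suffices to rule out $p\mid\ell$. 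If $p\mid\ell$, then $\beta:=\alpha^{\ell/p}$ has order $p$ and rotates $C$ by $N/p=k$, i.e.\ $\beta(w_m)=w_{m+k}$ for all $m$; the plan is to show $\beta\in\la\rho\ra$ and then that this is impossible.

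To locate $\beta$ I would determine the Sylow $p$-subgroup of $A$. Let $P$ be a Sylow $p$-subgroup with $\la\rho\ra\le P$. Since $\la\rho\ra\le P$, every $P$-orbit is a union of fibres of $p$-power size; as there are only $k<p$ fibres, no $P$-orbit can contain two of them, so the $P$-orbits are exactly the fibres. Thus $P$ is transitive on each fibre, and the induced action on the $p$ points of a fibre is a transitive $p$-group, hence cyclic, so each $g\in P$ acts on $F_i$ as $v_i^j\mapsto v_i^{j+a_i}$. Applying $g$ to an inter-fibre edge $v_i^j\sim v_{i+1}^j$ yields $v_i^{j+a_i}\sim v_{i+1}^{j+a_{i+1}}$, which (as $g$ preserves each fibre) is again an inter-fibre edge only if $a_i=a_{i+1}$; hence $a_0=\dots=a_{k-1}$ and $g\in\la\rho\ra$. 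Therefore $P=\la\rho\ra$, so every Sylow $p$-subgroup of $A$ has order $p$. Granting that $\mathcal F$ is $A$-invariant (equivalently $\la\rho\ra\trianglelefteq A$), $\la\rho\ra$ is the unique Sylow $p$-subgroup, and the order-$p$ element $\beta$ lies in $\la\rho\ra$.

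The contradiction then comes cheaply. Write $\beta=\rho^c$ with $c\neq 0$. Since $\rho^c$ fixes every fibre setwise, $w_m$ and $w_{m+k}=\beta(w_m)$ lie in the same fibre, so the fibre of $w_m$ depends only on $m\bmod k$; call it $F_{i_{m\bmod k}}$. Each fibre has $p$ vertices and each index $a\in\{0,\dots,k-1\}$ contributes the $p$ vertices $w_a,w_{a+k},\dots$ to $F_{i_a}$, so every fibre is hit by exactly one index and $i_0,\dots,i_{k-1}$ is a permutation of $\ZZ_k$. Consequently consecutive vertices $w_m,w_{m+1}$ always lie in distinct fibres, so \emph{every} edge of $C$ is an inter-fibre edge and $C$ is a spanning cycle of $\tilde X(\rho)$. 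As $\tilde X(\rho)$ is disconnected this is impossible. Hence $p\nmid\ell$, giving $\ell\le k$ and $\kappa(X)\le k$, which together with the lower bound yields $\kappa(X)=k$.

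The step I expect to be the main obstacle is the $A$-invariance of $\mathcal F$, equivalently the normality of $\la\rho\ra$ in $A$ (everything else is the essentially formal Sylow/lifting argument above). For $k=2$ this is classical, since $X$ is the prism $C_p\,\square\,K_2$ with $\Aut(X)\cong D_p\times\ZZ_2$, whose only subgroup of order $p$ is $\la\rho\ra$. For $k\ge 3$ the plan is to characterise the fibres intrinsically, for instance as the cycles of the distinguished $2$-factor of $X$ made of $p$-cycles, using $p>k$ to exclude any ``accidental'' $p$-cycle assembled from inter-fibre edges, so that $A$ is forced to permute them; an alternative is to show that all Sylow $p$-subgroups share the fibres as their common orbit set, again forcing $\la\rho\ra\trianglelefteq A$. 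Establishing this invariance is the delicate point, since for $r\neq\pm1$ and large $p$ the graph has few short cycles and the two edge-types are not separated by any obvious local count; once it is in place the argument closes uniformly in $k$.
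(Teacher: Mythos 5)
Your lower bound, your reduction of the upper bound to ruling out an order-$p$ rotation $\beta$, and your determination of the Sylow $p$-subgroup (any Sylow $p$-subgroup containing $\rho$ equals $\la\rho\ra$, via the translation action on fibres and the alignment forced by the matchings) all coincide with the paper's proof. The genuine gap is exactly where you flagged it: you insist on the literal membership $\beta\in\la\rho\ra$, which you can only obtain from normality of $\la\rho\ra$ in $A=\Aut(X)$ (equivalently, $A$-invariance of the fibre partition), and for $k\ge 3$ you leave this unproved, offering only a plan. Nothing you established forces normality; the number of Sylow $p$-subgroups could a priori exceed one, and your sketched routes (intrinsically characterising the fibres, excluding ``accidental'' $p$-cycles) would require substantial additional work.

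The missing step is, however, unnecessary, and the paper closes the argument using only ingredients you already have. Since you proved that the $p$-part of $|A|$ is exactly $p$, every subgroup of order $p$ is itself a Sylow $p$-subgroup, so by Sylow's theorem $\la\beta\ra=g\la\rho\ra g^{-1}$ for some $g\in A$ --- conjugacy, not normality. Conjugacy suffices because the property to be contradicted is conjugation-invariant: the orbits of $\beta$ are the $g$-images of the orbits of $\rho$, hence $\tilde X(\beta)=g(\tilde X(\rho))$ is isomorphic to $\tilde X(\rho)$, which is disconnected ($p$ disjoint $k$-cycles for $k\ge 3$, a perfect matching for $k=2$). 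On the other hand, your own rotation computation shows that consecutive vertices of $C$ lie in distinct $\beta$-orbits, so $C\subseteq\tilde X(\beta)$ and $\tilde X(\beta)$ is connected --- a contradiction that never asks where $\beta$ sits inside $A$. This is precisely the paper's argument, phrased there contrapositively: $\pi$ (your $\beta$) and $\rho$ cannot generate conjugate subgroups, so a Sylow $p$-subgroup would have order at least $p^2$, contradicting $P=\la\rho\ra$. With this single substitution --- ``$\la\beta\ra$ conjugate to $\la\rho\ra$'' in place of ``$\beta$ a power of $\rho$'' --- your proof becomes complete and is essentially identical to the paper's.
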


\begin{proof}
	Let $X=X(p,k;r)$. Then letting $n=p$ and $m=k$ we have that
	$\kappa(X(k,p;r))\geq k$ by Proposition~\ref{pro:twist}.
	(The additional assumption on $r-1$ being coprime with $n$ is clearly satisfied too.)
	Suppose that $\kappa(X)  > k$. Then since $|V(X)| = kp$ and $k < p$, 
	there exists a Hamilton cycle $C$ in $X$ with compression factor
	a multiple of $p$. In particular, this implies the existence of an automorphism $\pi \in \Aut(C)$ of order $p$ such that 
	the subgraph $\tilde X(\pi)$
	is a connected  (and hence a hamiltonian) graph.    
	Note that $\rho$ is an element of order $p$ which  does not 
	have this property. Namely, by definition $\tilde X(\rho)$ is the disconnected graph $pX_k$. In particular this implies that not all of the automorphisms of order $p$ in $\Aut(X)$ are conjugate. But this can only happen if a Sylow $p$-subgroup of $\Aut(X)$  is of order at least $p^2$. 
	Now since $k < p$, a Sylow $p$-subgroup $P$ containing $\rho$ must fix the orbits of $\rho$. But these orbits induce cycles of length $p$, while the bipartite graphs between two  neighbouring orbits of $\rho$ induce  perfect matchings. Consequently, $P$ coincides with $\la \rho \ra$, 
	contradicting the existence of the automorphism $\pi$. 
	This  shows that $\kappa(X) = k$, 
	as required. 
\end{proof}

In order to prove that there are infinitely many Cayley graphs with prescribed Hamilton compression we use the well-known Dirichlet's theorem on arithmetic progressions, the so-called
Dirichlet prime number theorem. 

%
%
\begin{proposition}\label{pro:dirichlet}
	{\rm \cite{D37} {\bf (Dirichlet prime number theorem)}}
	If $a$ and $b$ are relatively prime positive integers, then there are infinitely many primes of the form $a + jb$ with $j \in\ZZ$.
\end{proposition}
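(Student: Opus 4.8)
The plan is to follow Dirichlet's classical analytic argument via characters of the unit group $\ZZ_b^*$ and their associated $L$-functions. Write $\varphi(b)$ for the order of $\ZZ_b^*$. For each character $\chi\colon\ZZ_b^*\to\CC^*$, extended to all of $\ZZ$ by setting $\chi(n)=0$ whenever $\gcd(n,b)>1$, I would form the Dirichlet series $L(s,\chi)=\sum_{n\ge1}\chi(n)n^{-s}$, which converges for $\mathrm{Re}(s)>1$ and, since $\chi$ is completely multiplicative, admits the Euler product $L(s,\chi)=\prod_p(1-\chi(p)p^{-s})^{-1}$. Taking logarithms and expanding gives $\log L(s,\chi)=\sum_p\chi(p)p^{-s}+O(1)$ as $s\to1^+$, where the error term absorbs the contribution of prime powers.

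Next I would invoke the orthogonality relations for characters: for $\gcd(a,b)=1$ one has $\varphi(b)^{-1}\sum_\chi\overline{\chi(a)}\,\chi(n)$ equal to $1$ when $n\equiv a\pmod b$ and $0$ otherwise. Combining this with the logarithmic expansion isolates the primes in the prescribed progression,
$$
\sum_{p\equiv a\ (\mathrm{mod}\ b)}\frac{1}{p^s}=\frac{1}{\varphi(b)}\sum_\chi\overline{\chi(a)}\,\log L(s,\chi)+O(1).
$$
The whole strategy reduces to showing that the right-hand side tends to $+\infty$ as $s\to1^+$, since a convergent sum $\sum_{p\equiv a}p^{-1}$ would be incompatible with divergence here, and divergence forces the set of such primes to be infinite.

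The principal character $\chi_0$ supplies the dominant term: because $L(s,\chi_0)$ differs from the Riemann zeta function $\zeta(s)$ only through the finitely many Euler factors at primes dividing $b$, and $\zeta$ has a simple pole at $s=1$, we obtain $\log L(s,\chi_0)\to+\infty$. It therefore suffices to prove that every non-principal $\chi$ contributes a bounded term, that is, that $L(1,\chi)\ne0$ so that $\log L(s,\chi)$ stays finite as $s\to1^+$.

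The non-vanishing $L(1,\chi)\ne0$ is the crux and the main obstacle. For complex (non-real) characters I would argue via the product $\prod_\chi L(s,\chi)$: this product has non-negative Dirichlet coefficients and diverges at $s=1$, so at most one factor can vanish there; but a vanishing complex $\chi$ would force its conjugate $\overline{\chi}$ to vanish too, a contradiction. The genuinely hard case is that of the real (quadratic) characters, where this pairing argument collapses; here one must resort to a finer tool, either the positivity argument applied to $\zeta(s)L(s,\chi)$ or the connection with the class number formula of the associated quadratic field, to exclude $L(1,\chi)=0$. Once non-vanishing is secured for all non-principal characters, the displayed sum diverges as $s\to1^+$ and the theorem follows.
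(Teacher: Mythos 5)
The paper contains no proof of this proposition: it is quoted as a classical result, with the citation to Dirichlet's 1837 memoir standing in place of an argument, and it is then used as a black box in Corollary~\ref{cor:inf}. So there is no internal proof to compare yours against; the only meaningful question is whether your outline would itself constitute a proof.

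Your outline is the standard analytic argument and its architecture is correct: Euler products and logarithms for $\mathrm{Re}(s)>1$, orthogonality of the characters of $\ZZ_b^*$ to isolate the progression $a \pmod b$, divergence of $\log L(s,\chi_0)$ inherited from the pole of $\zeta$ at $s=1$, and reduction of everything to the non-vanishing $L(1,\chi)\neq 0$ for non-principal $\chi$. But as written it is a roadmap rather than a proof, because the decisive step is explicitly deferred: for real (quadratic) characters you only name the available tools (positivity applied to $\zeta(s)L(s,\chi)$, or the class number formula) without carrying either one out, and that non-vanishing is precisely the hard core of Dirichlet's theorem --- everything preceding it is routine. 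There is also a smaller imprecision in your treatment of complex characters: the product $\prod_\chi L(s,\chi)$ does not ``diverge at $s=1$''; the correct statement is that it is at least $1$ for real $s>1$ (its logarithm is a sum of nonnegative terms), so that if two conjugate non-principal factors both vanished at $s=1$, the single simple pole coming from the $\chi_0$ factor could not prevent the product from tending to $0$, a contradiction. If your intent is merely to justify the citation, your summary is faithful to the classical proof; if the intent is a self-contained proof, the real-character case must be completed.
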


%
%
\begin{corollary}
	\label{cor:inf}
	Given a positive integer $k\geq2$ there exist infinitely many Cayley graphs with Hamilton compression equal to $k$. 
\end{corollary}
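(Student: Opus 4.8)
The plan is to combine Theorem~\ref{the:family} with the Dirichlet prime number theorem (Proposition~\ref{pro:dirichlet}): for each fixed $k\geq 2$, these two results together furnish an infinite supply of admissible primes $p$, each producing a Cayley graph $X(k,p;r)$ of the prescribed Hamilton compression. The substantive work has already been done in Proposition~\ref{pro:twist} and Theorem~\ref{the:family}, so the corollary amounts to assembling the pieces.

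First I would fix $k\geq 2$ and apply Proposition~\ref{pro:dirichlet} with $a=1$ and $b=k$. Since $\gcd(1,k)=1$, there are infinitely many primes of the form $1+jk$, that is, infinitely many primes $p$ satisfying $p\equiv 1\pmod k$. Next, for each such $p$, I would use that the multiplicative group $\ZZ_p^*$ is cyclic of order $p-1$; as $p\equiv 1\pmod k$ means $k\mid p-1$, this cyclic group contains an element $r$ of order exactly $k$. With this choice of $r$ the hypotheses of Theorem~\ref{the:family} are satisfied, whence $\kappa(X(k,p;r))=k$.

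It then remains to confirm two routine points. For the Cayley property, I would appeal to the remark following the construction of $X(m,n;r)$: requiring that $\sigma$ have order $m=k$ together with $r$ having order $k$ in $\ZZ_p^*$ guarantees, via \cite[Theorem 9]{AP82}, that each $X(k,p;r)$ is a Cayley graph (its abstract group being the semidirect product $\ZZ_p\rtimes\ZZ_k$). For the infinitude, I would simply count vertices: the graph $X(k,p;r)$ has $|V(k,p)|=kp$ vertices, so distinct primes $p$ yield graphs of distinct orders, which are therefore pairwise non-isomorphic.

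Putting these together produces, for each fixed $k\geq 2$, infinitely many pairwise non-isomorphic Cayley graphs with Hamilton compression equal to $k$, as required. There is no genuine obstacle here beyond bookkeeping; the only mild care needed is to record the element $r$ of order $k$ (a one-line consequence of the cyclicity of $\ZZ_p^*$) and to note that varying $p$ forces pairwise non-isomorphism, both of which follow at once from the machinery already established.
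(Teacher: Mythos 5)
Your proposal is correct and takes essentially the same approach as the paper: combine Theorem~\ref{the:family} with Dirichlet's theorem (Proposition~\ref{pro:dirichlet}) applied with $a=1$ and $b=k$ to obtain infinitely many admissible primes $p$. The extra details you record --- the existence of $r$ of order $k$ from the cyclicity of $\ZZ_p^*$, the Cayley property via \cite[Theorem 9]{AP82}, and pairwise non-isomorphism because the graphs $X(k,p;r)$ have distinct orders $kp$ --- are left implicit in the paper's two-line proof but are exactly the right bookkeeping.
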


\begin{proof}
	By Theorem~\ref{the:family}
	the graph $X(k,p;r)$ where $p$ is a prime, $k$ divides $p-1$
	and $r \in \ZZ_p^*$ is of order $k$,
	has Hamilton compression equal to $k$. On the other hand, 
	by Proposition~\ref{pro:dirichlet}, letting $a=1$ and $b=k$
	there exist infinitely many primes $p$ such that
	$k$ divides $p-1$. 
\end{proof}


\section{Non-Cayley graphs with trivial Hamilton compression}
\label{sec:non-cay}
\noindent

In this section we give  infinitely many  non-Cayley graphs 
with Hamilton compression equal to $1$.
For this purpose we bring in the construction of non-Cayley metacirculants
of order a product of two distinct primes, given in  \cite{AP82}.
First, given a prime $q$, we construct infinitely many  non-Cayley $(q,p)$-metacirculants, $p$ a prime, using Proposition~\ref{pro:dirichlet}. 
Namely, there  are infinitely
many primes $p$ in the arithmetic progression
$1+q^2, 1+2q^2, 1+3q^2, \cdots$. 
So let $p = 1+ N q^t$ be such a prime  with $t\geq2$ and  $(N,q) =1$. Then letting $\lambda$ be a generator of $\ZZ_p^*$, we set $r = \lambda^N$, so that the order of $r \in \ZZ_p^*$ is $q^t$. Further, let 
$R = \{ \la r^q \ra \cup -\la r^q \ra \}$.  
Then  a non-Cayley  $(q,p)$-metacirculant  $Y(q,p)$ is defined to have 
vertex set  $V(q,p)$  (recall (\ref{eq:vertex}))
and   edge set

$$
E(Y(q,p)) = \{ v_i^j v_{i+1}^j \mid i \in \ZZ_q, j \in \ZZ_p\} \cup
\{ v_i^j v_i^{j+x} \mid i \in \ZZ_q, j \in \ZZ_p, x \in r^i R\}. 
$$

\smallskip
\noindent
This construction of Alspach and Parsons can be slightly altered 
if one wants to get  non-Cayley metacirculants with smaller valency.
By letting $Q  = \{ \la r^{q^{t-1}} \ra \cup -\la r^{q^{t-1}} \ra \}$,
and defining the graph $Z(q,p)$ to have  the same vertex set as $Y(q,p)$,
while the edge set is

$$
E(Z(q,p)) = \{ v_i^j v_{i+1}^j \mid i \in \ZZ_q, j \in \ZZ_p\} \cup
\{ v_i^j v_i^{j+x} \mid i \in \ZZ_q, j \in \ZZ_p, x \in r^i Q\}. 
$$

\smallskip
\noindent
Observe that $Y(q,p) =Z(q,p)$ for $t=2$. 
In the next theorem we show that both 
$Y(q,p)$ and $Z(q,p)$ have trivial Hamilton compression.

%
%
\begin{theorem}
	\label{the:YZ}
	Let $p$ and $q$ be primes such that 
	$p\equiv {1\pmod {q^2}}$
	and let $r \in \ZZ_p^*$ 
	have order $q^t$,  with $t \geq2$. 
	Then $\kappa(Y(q,p)) = \kappa(Z(q,p)) =1$.
\end{theorem}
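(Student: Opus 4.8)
The plan is to show that no Hamilton cycle of $X\in\{Y(q,p),Z(q,p)\}$ carries a nontrivial rotational symmetry, while $X$ itself is hamiltonian (so that $\kappa(X)=1$ rather than $0$). Since any cyclic group of composite order contains an element of prime order that again acts as a rotation, a Hamilton cycle of compression factor $>1$ would be $\ell$-symmetric for some prime $\ell\mid pq$, i.e. $\ell\in\{p,q\}$; it therefore suffices to exclude $p$-symmetric and $q$-symmetric Hamilton cycles. The whole argument rests on the structural lemma that $\la\rho\ra$ is a normal Sylow $p$-subgroup of $A:=\Aut(X)$ of order $p$. Equivalently, the orbits $O_0,\dots,O_{q-1}$ of $\rho$ form an $A$-invariant block system whose quotient $X_\rho$ is a single cycle, each block $O_i$ inducing the \emph{connected, proper} circulant of prime order $p$ with connection set $r^iR$ (resp. $r^iQ$), and consecutive blocks being joined by the perfect matching $v_i^j\sim v_{i+1}^j$.

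Granting this lemma I would dispose of $p$-symmetry exactly as in the proof of Theorem~\ref{the:family}. If $C=v_0v_1\cdots v_{pq-1}$ is $p$-symmetric via $\pi\in A$ of order $p$, then $\pi(v_s)=v_{s+q}$, so an edge $v_sv_{s+1}$ of $C$ lying inside a single $\pi$-orbit would force $1\equiv cq\pmod{pq}$ for some $c$, which is impossible since $\gcd(q,pq)=q>1$. Hence every edge of $C$ joins distinct $\pi$-orbits, so $C\subseteq\tilde X(\pi)$ and $\tilde X(\pi)$ is connected. But the normal Sylow $p$-subgroup is $\la\rho\ra$, so $\pi=\rho^c$ and $\tilde X(\pi)=\tilde X(\rho)$ is the disjoint union of $p$ copies of the $q$-cycle, hence disconnected — a contradiction.

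For $q$-symmetry, let $\beta\in A$ of order $q$ realize the symmetry, so $\beta$ is $(p,q)$-semiregular, in particular fixed-point-free. Projecting to the quotient gives $\bar\beta\in\Aut(X_\rho)$, a dihedral group. If $\bar\beta=1$, then $\beta$ fixes every block $O_i$ setwise, and its restriction to $O_0$ lies in the automorphism group of a proper circulant of prime order, which by Burnside's theorem on transitive groups of prime degree is contained in $\mathrm{AGL}(1,p)$; an order-$q$ element of $\mathrm{AGL}(1,p)$ has a fixed point, while a trivial restriction propagates pointwise along the matchings to force $\beta=\mathrm{id}$. Either way $\beta$ fixes a vertex, contradicting semiregularity. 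If $\bar\beta\neq1$, it is a nontrivial rotation, so $\beta$ permutes the $q$ blocks in a single cycle; then $\la\rho,\beta\ra=\la\rho\ra\rtimes\la\beta\ra$ has order $pq$ and is transitive on $V(X)$, hence regular, so $X$ would be a Cayley graph — contradicting the non-Cayleyness built into $Y(q,p)$ and $Z(q,p)$. This rules out $q$-symmetry; together with hamiltonicity of $X$ (which I would cite from the theory of vertex-transitive graphs of order a product of two primes, the Petersen graph $Y(2,5)$ being the sole non-hamiltonian exception) we conclude $\kappa(X)=1$.

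The main obstacle is the structural lemma. That a Sylow $p$-subgroup has order exactly $p$ follows as in Theorem~\ref{the:family}: since $q<p$, such a subgroup containing $\rho$ must fix each block setwise, its restriction to a block lies in the automorphism group of a $p$-vertex circulant (so has order dividing $p$), and the matchings propagate pointwise-fixing between blocks, forcing the subgroup to equal $\la\rho\ra$. The genuinely delicate point is normality, for which I would invoke the classification of automorphism groups of vertex-transitive graphs of order $pq$ (Maru\v si\v c--Scapellato and related work): our graphs have proper circulant blocks and hence lie in none of the exceptional, $p$-nonnormal families. A cleaner self-contained alternative, should one be able to prove $A=\la\rho,\sigma\ra\cong\ZZ_p\rtimes\ZZ_{q^t}$ outright, bypasses the block analysis for the $q$-symmetry step entirely: there the elements of order $q$ are precisely the conjugates of the powers of $\sigma^{q^{t-1}}$, each of which fixes every vertex $v_i^0$, so $A$ contains no fixed-point-free element of order $q$ and $q$-symmetry is impossible on the nose.
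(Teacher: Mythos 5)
Your proposal follows the same skeleton as the paper's proof, and where the two overlap the arguments agree: both rest on the structural lemma that $\la\rho\ra$ is a normal Sylow $p$-subgroup (order $p$ forced by $pq<p^2$ together with the block--matching propagation; normality imported from the Maru\v si\v c--Scapellato classification \cite{DM94} of vertex-transitive graphs of order $pq$), and both then kill $p$-symmetry by observing that a $p$-symmetric Hamilton cycle would force $\tilde X(\pi)=\tilde X(\rho)$ to be connected, whereas it is a disjoint union of $q$-cycles (or of copies of $K_2$). The genuine difference is your treatment of $q$-symmetry. The paper dismisses that case in a single unsupported sentence (``by the construction it is therefore clear that semiregular automorphisms of $Y$ and $Z$ must be of order $p$''), whereas you actually prove it: if $\bar\beta=1$, Burnside's theorem places the restriction of $\beta$ to a block inside ${\rm AGL}(1,p)$, whose order-$q$ elements fix a point, while a trivial restriction propagates along the matchings to give $\beta={\rm id}$; if $\bar\beta\neq1$, then by normality of $\la\rho\ra$ the product $\la\rho\ra\la\beta\ra$ is a transitive subgroup of order $pq$, hence regular, contradicting non-Cayleyness via Sabidussi. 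This fills a real gap in the paper's exposition and is the most valuable part of your write-up.

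There is, however, one flaw---which you half-notice and which the paper shares. For $(q,p)=(2,5)$ one has $t=2$, $r^2\equiv-1\pmod 5$, $R=\{\pm1\}$, $rR=\{\pm2\}$, so $Y(2,5)=Z(2,5)$ \emph{is} the Petersen graph $GP(5,2)$; its Hamilton compression is $0$, so the claimed equality $\kappa=1$ fails there, and your normality step (``our graphs lie in none of the exceptional, $p$-nonnormal families'') is false for exactly this graph, whose automorphism group is primitive. You correctly identify the Petersen graph as $Y(2,5)$ when citing hamiltonicity of vertex-transitive graphs of order $pq$, yet you still conclude $\kappa(X)=1$ for every member of the family; a blind proof of the statement as given cannot succeed, so this case must be explicitly excluded. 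The paper is no better off: it asserts that $Y$ and $Z$ are $4$-valent and hence distinct from the exceptional graphs of \cite{DM94}, which is false when $q=2$ and $t=2$ (those graphs are the cubic generalized Petersen graphs $GP(p,r)$ with $r^2\equiv-1\pmod p$). For every admissible pair other than $(2,5)$ your argument goes through and is, in fact, more complete than the paper's.
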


\begin{proof}
	We simplify the notation by letting $Y = Y(q,p)$, $Z = Z(q,p)$ 
	and  $V =V(q,p)$.
	Now, observe that the two permutations $\rho$ and $\sigma$, given in  \ref{eq:rho}) and (\ref{eq:sigma}
	with $m=q$ and $n=p$, are automorphisms of both $Y$  and  $Z$.
	While the order of $\rho$ is clearly $p$, the order of $\sigma$ is  $q^t$
	and  $q^2$, respectively, in the graphs $Y$ and  $Z$.
	Since $t\geq2$, the group $\la  \rho, \sigma \ra$
	acts transitively but not regularly on $V(q,p)$ 
	and so both $Y$ and $Z$ are non-Cayley graphs.
	
	By the construction it is therefore clear that semiregular automorphisms of 
	$Y$ and $Z$ must be of order $p$. 
	Moreover, since $qp < p^2$, the orbits of a 
	Sylow $p$-subgroup $P$ containing  $\la \rho \ra$  
	coincide with the orbits of $\la \rho \ra$ and consequently 
	$P = \la \rho \ra$.
	In fact, $P$ is a normal subgroup. Namely, we know by \cite{DM94} that the only non-Cayley $(q,p)$-metacirculants whose automorphism group is not $p$-imprimitive are the Petersen graph, its complement, 
	and three basic orbital graphs of order $57$ obtained from the action of $\operatorname{PSL}(2,19)$ on cosets of $A_5$ of valencies $6$, $20$ and $30$, and their complements, of course.  Using a computer search,  the compression number of the basic orbital graphs (and their complements) arising from the action of $\operatorname{PSL}(2,19)$ on cosets of $A_5$ can be verified to be equal to $19$. Moreover, the Petersen graph does not have a Hamilton cycle, so its compression number is equal to $0$ by definition. Its complement, on the contrary, is hamiltonian and has compression number equal to  $5$.
	This means that the automorphism 
	groups of our graphs $\Aut (Y)$ and $\Aut (Z)$
	of our graphs $Y$ and $Z$ (being $4$-valent) 
	are not primitive and so must be  $p$-imprimitive. Consequently,  
	$P = \la \rho \ra$ is normal in $\Aut (Y)$ and $\Aut (Z)$.
	(Let us also mention that the Petersen graph is the only generalized Petersen graph  $GP(p,r)$ with a primitive automorphism group and hence  with a Sylow $p$-subgroup not normal.)
	But the orbits of $\rho$ are not independent sets and so the subgraphs  $\tilde Y(\rho)$ and $\tilde Z(\rho)$ are unions of $q$-cycles 
	for $q\geq3$ and perfect matchings $pK_2$ for $q=2$ and thus disconnected. It follows that no semiregular automorphism of order $p$  gives rise to a rotational symmetry of a Hamilton cycle. Therefore, the corresponding Hamilton compressions of $Y$ and $Z$ are trivial.
\end{proof}

It is worthwhile to mention that  in \cite{AP82,AP82-2,DM88}, Hamilton cycles with trivial symmetries were constructed for metacirculant graphs.

The next corollary is now immediate.

%
%
\begin{corollary}
	\label{cor:infnon}
	There exist infinitely many non-Cayley graphs with Hamilton compression equal to $1$. 
\end{corollary}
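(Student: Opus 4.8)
The plan is to harvest Corollary~\ref{cor:infnon} directly from Theorem~\ref{the:YZ} together with the Dirichlet prime number theorem (Proposition~\ref{pro:dirichlet}); no new combinatorial input is required, so the task reduces to verifying that the hypotheses of Theorem~\ref{the:YZ} can be met infinitely often.

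First I would fix a single prime $q$ (taking $q=2$ is convenient, though any prime works) and apply Proposition~\ref{pro:dirichlet} with $a=1$ and $b=q^2$. Since $\gcd(1,q^2)=1$, this yields infinitely many primes $p$ of the form $p=1+jq^2$, that is, infinitely many primes $p$ with $p\equiv 1\pmod{q^2}$. For each such $p$ I would extract the exact power of $q$ dividing $p-1$, writing $p-1=Nq^t$ with $\gcd(N,q)=1$; the congruence $p\equiv 1\pmod{q^2}$ forces $t\ge 2$. Taking a generator $\lambda$ of $\ZZ_p^*$ and setting $r=\lambda^N$ then produces an element $r\in\ZZ_p^*$ of order exactly $q^t$, so all hypotheses of Theorem~\ref{the:YZ} are satisfied.

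With this data in hand, Theorem~\ref{the:YZ} immediately gives that the graphs $Y(q,p)$ and $Z(q,p)$ are non-Cayley and satisfy $\kappa(Y(q,p))=\kappa(Z(q,p))=1$. To conclude that we genuinely obtain infinitely many such graphs, I would observe that, with $q$ held fixed, the graph $Y(q,p)$ has order $qp$, which takes infinitely many distinct values as $p$ ranges over the primes produced above; hence these graphs are pairwise non-isomorphic, and they furnish the required infinite family.

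I do not anticipate any genuine obstacle: all of the substantive content---non-Cayleyness and triviality of the Hamilton compression---is already packaged inside Theorem~\ref{the:YZ}. The only point meriting a word of justification is that distinct primes yield non-isomorphic graphs, and this is immediate from counting vertices once $q$ is fixed.
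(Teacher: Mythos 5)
Your proposal is correct and follows essentially the same route as the paper: the construction preceding Theorem~\ref{the:YZ} already invokes Proposition~\ref{pro:dirichlet} (with the progression $1+jq^2$ for a fixed prime $q$) to produce infinitely many valid primes $p$, and the paper then declares the corollary immediate from Theorem~\ref{the:YZ}, exactly as you do. Your added remark that the graphs $Y(q,p)$ are pairwise non-isomorphic because their orders $qp$ differ is a sensible (if implicit in the paper) finishing touch.
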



\section{Vertex-transitive graphs of specific orders}
\label{sec:pq}
\noindent

The existence of Hamilton cycles in connected vertex-transitive graphs 
has been proved for various special orders (see \cite{}), such as for example:
($p$ denotes a prime): 

\begin{enumerate}[(i)]
	\itemsep=0pt
	\item $p$, $p^2$, $2p^2$, $p^3$, where $p$ is a prime \cite{DM87}, 
	\item product of two distinct prime numbers 
	(except for the Petersen graph) \cite{DKM21}, and
	\item $4p$, $6p$ and $10p$ (except for the Coxeter graph and some families of order $6p$ and $10p$) \cite{KM08,KMZ12,KS09}; 
\end{enumerate}

It is worth mentioning that in some of the above constructions, the lifting cycle technique was used, giving us, as a minimum,  a lower bound for the Hamilton compressions of the corresponding Hamilton cycles. 
On the other hand, the primary goal being simply the construction of a Hamilton cycle rather than one with largest possible rotational symmetry,  
it is not surprising that in some cases the Hamilton cycles have  trivial compression factors.

\begin{example}
	\label{ex:}
\end{example}

Recall that vertex-transitive graphs of prime order are necessarily circulants, and so the corresponding Hamilton compression 
coincides with the order of the graph.
Also, since every transitive group of prime square degree is regular, 
it follows that a vertex-transitive graph of order $p^2$, $p$ a prime, 
is  a Cayley graph of either $\ZZ_{p^2}$ or $\ZZ_{p}^{2}$.
In the first case the corresponding Hamilton compression again 
coincides with the order $p^2$, while in the second case,  it  
equals $p$ (see \cite{DM85}).
As for vertex-transitive graphs of order $p^3$, it is known that they are always{  Cayley graphs \cite{DM85}}. Also, they have been proved to be hamiltonian \cite{DM85}.
In the case of Cayley graphs of abelian groups 
$\ZZ_{p^3}$, $\ZZ_{p^2} \times \ZZ_p$ or $\ZZ_{p}^{3}$
of order $p^3$ we may argue as above that their Hamilton
compressions are respectively $p^3$, $p^2$ and $p$.
As for the two nonisomorphic non-abelian groups of order $p^3$,
$p$-symmetric Hamilton cycles  were constructed in  
\cite[Theorem 4.4]{DM85}.
While the proof in \cite{DM85} is quite technical it may be simplified by applying  the following beautiful result about cycle spaces in Cayley graphs of abelian groups proved a few years later  \cite{ALW90}.

%
%

\begin{proposition}
	\label{pro:alspach}
	{\rm [Theorem 2.1.]}\cite{ALW90}
	Let $X$ be a connected Cayley graph on a finite abelian group $G$,
	and let $\C$  and $\H$  be the cycle space and the Hamilton space of $X$ respectively. Then:
	\begin{enumerate}[(i)]
		\itemsep=0pt
		\item $\H = \C$ when $X$ is either bipartite or has odd order;
		\item $\H $ has co-dimension $2$ in $\C$ when $X$ is a prism over 
		an odd length cycle;
		\item $\H$  has co-dimension 1 in $\C$ in all other situations.
	\end{enumerate}
\end{proposition}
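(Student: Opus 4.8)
The plan is to work over the field $\F_2$, identifying $\C$ with the space of even subgraphs of $X$ and $\H$ with the subspace spanned by the edge sets of Hamilton cycles, so that $\H\subseteq\C$ with $\dim\C=|E(X)|-|V(X)|+1$ since $X$ is connected. The workhorse is the exchange principle: if $H$ is a Hamilton cycle and $Q\in\C$ is such that the symmetric difference $H+Q$ is again a Hamilton cycle, then $Q=H+(H+Q)\in\H$. The whole argument then splits into a lower bound (linear functionals that annihilate $\H$, forcing codimension) and a matching upper bound (enough Hamilton cycles to realize everything else), and the three cases record exactly which obstructions survive.

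For the lower bound I would first treat the edge-count parity $\phi(S)=|S|\bmod 2$. A Hamilton cycle has $|V(X)|$ edges, so $\phi$ is constant on Hamilton cycles, equal to $|V(X)|\bmod 2$; hence $\phi$ annihilates $\H$ precisely when $|V(X)|$ is even, whereas $\phi$ is nonzero on $\C$ precisely when $X$ contains an odd cycle, i.e.\ when $X$ is non-bipartite. This already gives case (i): if $X$ is bipartite then $\phi\equiv0$ on $\C$, and if $|V(X)|$ is odd then $\phi$ is nonzero already on a single Hamilton cycle, so in both situations $\phi$ imposes no constraint and (granted the construction below) $\H=\C$. When $|V(X)|$ is even and $X$ is non-bipartite, $\phi$ certifies codimension at least $1$. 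For case (ii) I would exhibit, for the prism over an odd cycle, a second functional independent of $\phi$ --- concretely an $\F_2$-functional built from the two-layer structure (for the triangular prism one may take the number of top-rim edges modulo $2$, and in general a layer-crossing parity), check that it annihilates every Hamilton cycle, and check independence from $\phi$; this yields codimension at least $2$.

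The substantive half is the matching upper bound, that these are the only obstructions. Here I would use that for an abelian group $G$ the cycle space of the Cayley graph is generated by the translates of two kinds of elementary cycles --- the commuting squares $\la g,g+s,g+s+t,g+t\ra$ coming from pairs $s,t$ of connection elements, and the single-generator polygons $\la g,g+s,\dots,g+(d-1)s\ra$ of length $d=\mathrm{ord}(s)$ --- since these are the cycles traced by the relators of a presentation of $G$. It then suffices to show that each such generating cycle, except for those excluded by the surviving parity functional(s), lies in $\H$. I would do this by induction on $|G|$: passing to the cyclic quotient $G/\la a\ra$, taking a Hamilton cycle in the quotient Cayley graph, and lifting it to $X$ by the standard lifting technique, arranging two lifts that differ exactly in the prescribed elementary cycle so that the exchange principle deposits that cycle into $\H$. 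The commuting squares and the even-length generator polygons are always realizable this way, whereas an odd-length generator polygon is realizable unless its odd length clashes with $\phi$, which is exactly the non-bipartite even-order case and accounts for the single codimension drop in case (iii).

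The step I expect to be the main obstacle is precisely this lifting construction: producing, uniformly over all abelian $G$ and all connection sets, Hamilton cycles that realize every needed exchange, and certifying that the only failures are the parity obstruction and, for one exceptional family, a second one. Keeping the constructions consistent when $G$ has several invariant factors, and isolating the prism over an odd cycle as the unique shape for which the layer functional becomes a genuinely independent second obstruction, is the delicate part; the triangular prism, whose three Hamilton cycles satisfy a single linear relation and therefore span only a codimension-$2$ subspace of its four-dimensional cycle space, is the smallest instance and a useful guide for the general bookkeeping. Once every generator of $\C$ is shown to lie in $\H$ modulo the span of $\phi$ (together with the second functional in the exceptional case), a dimension count against the lower bound closes all three cases simultaneously.
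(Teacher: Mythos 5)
This proposition is not proved in the paper at all: it is quoted, with attribution, as Theorem~2.1 of the Alspach--Locke--Witte paper \cite{ALW90}, and is used later as a black box (in Proposition~\ref{pro:DM} and Theorem~\ref{the:metapq}). So there is no in-paper argument to compare against; what you have attempted is a reconstruction of an entire external research paper, and your proposal does not get there. Your lower-bound half is fine: the edge-count parity functional correctly explains why non-bipartite even order forces codimension at least $1$, and your second functional for the odd prism (top-rim edges modulo $2$) does annihilate every Hamilton cycle, since in the prism over an odd cycle every Hamilton cycle uses exactly two spokes and hence $n-1$ top edges; together with the square $v_0 v_1 v_1' v_0'$ this certifies codimension at least $2$ there. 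But the matching upper bound --- that these parities are the \emph{only} obstructions --- is the entire content of the theorem, and your proposal explicitly defers it (``the step I expect to be the main obstacle is precisely this lifting construction''). A plan that postpones exactly the substantive step is not a proof. In \cite{ALW90} this step rests on the Chen--Quimpo theorem (connected Cayley graphs of abelian groups of order at least $3$ are Hamilton-connected when non-bipartite and Hamilton-laceable when bipartite), which supplies the abundance of Hamilton cycles needed to absorb each generating cycle into $\H$; nothing in your sketch substitutes for it.

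There is also a concrete error in the reduction itself. You claim the cycle space of $\Cay(G,S)$ is generated by translates of commuting squares and single-generator polygons, ``since these are the cycles traced by the relators of a presentation of $G$.'' That presentation (commutators plus the orders of the individual generators) presents the direct product $\prod_{s\in S^+}\ZZ_{|s|}$, not $G$, unless $G$ happens to be that direct product. Translated relator cycles generate the cycle space if and only if the relators normally generate the kernel of the free group onto $G$; so for instance in $\Cay(\ZZ_7,\{\pm1,\pm2,\pm3\})$ the squares and heptagons do not span the cycle space --- one also needs mixed relator cycles such as the triangle $0,1,3$ coming from the relation $1+2=3$. Consequently your induction, even if completed, would be verifying membership in $\H$ for a set of cycles that need not generate $\C$, and the dimension count at the end would not close. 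Both issues --- the missing realization argument and the wrong generating set --- would have to be repaired before this could stand as a proof of the cited theorem.
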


%
%

\begin{proposition}
	\label{pro:DM}
	{\rm [Theorem 4.4]}\cite{DM85}
	Let $X$ be a Cayley graph of a non-abelian group $G$ of order $p^3$, 
	$p$ a prime. Then $\kappa(X) \geq p$.
\end{proposition}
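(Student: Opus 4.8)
The plan is to exhibit, in each of the two non-abelian groups of order $p^3$, an explicit semiregular automorphism of order $p$ and a Hamilton cycle invariant under it. Since $X$ is a Cayley graph of $G$, the right-regular representation gives $G\le\Aut(X)$, so every element $g\in G$ acts as a semiregular automorphism whose order is the order of $g$. The two non-abelian groups of order $p^3$ (the Heisenberg group of exponent $p$ and the modular group of exponent $p^2$) each possess a central element $z$ of order $p$ generating the center $Z(G)$. The natural choice is to take $\alpha$ to be right-translation by such a central $z$; being central it is forced to be semiregular with $p$ orbits of length $p^2$ (actually $p^2$ orbits of length $p$), and the quotient $X/\langle z\rangle$ is a Cayley graph of the abelian quotient group $G/Z(G)\cong\ZZ_p\times\ZZ_p$ of order $p^2$.

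First I would set up the quotient: the $p$-symmetric Hamilton cycle is precisely a Hamilton cycle of $X$ that lifts from a suitable closed walk in the quotient. The key point is that $G/Z(G)\cong\ZZ_p^2$ is abelian, so I can invoke Proposition~\ref{pro:alspach}: in a connected Cayley graph of an abelian group of odd order (here order $p^2$ with $p$ odd), the Hamilton space equals the cycle space, $\H=\C$. This tells us the quotient graph is not merely hamiltonian but has its Hamilton cycles spanning the whole cycle space, which is exactly the flexibility needed to control the ``voltage'' of a Hamilton cycle around its edges. The lifting technique, as used in Proposition~\ref{pro:twist}, requires the net voltage of the chosen Hamilton cycle of the quotient (read in $Z(G)\cong\ZZ_p$) to be a generator of $\ZZ_p$; then the cycle lifts to a single Hamilton cycle of $X$ invariant under $\langle z\rangle$, yielding $\kappa(X)\ge p$.

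The second and essential step is to guarantee that such a Hamilton cycle of the quotient with nonzero (hence generating, since $p$ is prime) total voltage actually exists. Here is where $\H=\C$ does the work: if \emph{every} Hamilton cycle of the quotient had voltage $0$, then the voltage function would vanish on all of $\H=\C$, forcing every cycle (and hence the connected lift) to split over $\langle z\rangle$, contradicting connectivity of $X$. More carefully, the total voltage defines a homomorphism from $\C$ to $\ZZ_p$, and connectedness of the Cayley graph $X$ of $G$ (which generates $G$, not just $G/Z(G)$) forces this homomorphism to be surjective on $\C$; since $\H=\C$, it is already surjective on the Hamilton space, so some Hamilton cycle of the quotient carries a generating voltage. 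I would make this rigorous by translating ``connectedness of $X$'' into ``the voltages of the cycle space generate $Z(G)$'' via a standard covering-space/voltage-graph argument.

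The main obstacle I anticipate is the final bookkeeping for the modular group $\ZZ_{p^2}\rtimes\ZZ_p$, where the center sits inside a cyclic subgroup of order $p^2$ and the commutator structure is slightly less symmetric than in the Heisenberg case; one must verify that the chosen connection set still projects to a connected generating set of $G/Z(G)$ and that the voltage homomorphism remains surjective. Both non-abelian groups must be treated, but the argument is uniform once phrased in terms of the voltage homomorphism on $\C$, so the remaining work is to check that Proposition~\ref{pro:alspach}(i) applies (odd order quotient) and that the commutator $[G,G]=Z(G)$ has order $p$, which holds for both groups. This reduces the technical computation of \cite{DM85} to the single clean statement that surjectivity of the voltage map on $\C=\H$ produces a liftable Hamilton cycle, giving $\kappa(X)\ge p$ as required.
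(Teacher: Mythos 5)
Your overall strategy coincides with the paper's: quotient by the central subgroup $\la z\ra=Z(G)$ of order $p$, apply Proposition~\ref{pro:alspach}(i) to the abelian quotient of odd order $p^2$ to identify the Hamilton space with the cycle space, and lift a Hamilton cycle of nonzero voltage. However, there is a genuine gap at the step you yourself call essential: you claim that connectedness of $X$ forces the voltage homomorphism $\C\to\ZZ_p$ to be surjective ``via a standard covering-space/voltage-graph argument.'' That standard argument gives something weaker: surjectivity of the voltage map is equivalent to connectedness of the derived graph over the quotient, which is not $X$ itself but the spanning subgraph $\tilde X(z)=\Cay(G,S\setminus Z(G))$ obtained by deleting all edges joining two vertices in the same fibre. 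Those deleted edges come from central elements of the connection set $S$; they project to loops, which contribute nothing to the loop-free cycle space $\C$ and are traversed by no Hamilton cycle of the quotient. If every cycle of the quotient had zero voltage, then $\tilde X(z)$ would split into $p$ components, but $X$ could a priori still be connected through the intra-fibre edges; your sentence ``forcing every cycle (and hence the connected lift) to split over $\la z\ra$, contradicting connectivity of $X$'' conflates $X$ with $\tilde X(z)$. Indeed, the implication ``$\la S\ra=G$ implies the voltage map is surjective'' is false without using non-abelianness: for an abelian group $G=H\times\la z\ra$ of order $p^3$ with $|H|=p^2$, taking $S$ to be a generating set of $H$ together with $z$ gives a connected Cayley graph $X$ in which every cycle of the quotient $X/\la z\ra$ has zero voltage. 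So any correct proof must invoke non-abelianness at exactly this point, and your proposed justification never does.

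The missing ingredient is group-theoretic, and it is precisely what the paper's opening move supplies: since $G$ is non-abelian, the generating set $S$ contains two noncommuting elements $a,b$; a non-abelian subgroup of a group of order $p^3$ must have order $p^3$ (subgroups of order $p$ or $p^2$ are abelian), so $\la a,b\ra=G$, whence $\tilde X(z)$ contains the connected spanning subgraph $\Cay(G,\{a^{\pm1},b^{\pm1}\})$ and the voltage homomorphism is surjective. (Equivalently: $Z(G)=[G,G]\le\Phi(G)$, so central elements are non-generators and $S\setminus Z(G)$ still generates $G$.) With this one observation inserted, your argument closes and becomes the paper's proof. Note also that once the argument is phrased this way, no case distinction between the two non-abelian groups of order $p^3$ is needed, so the separate ``bookkeeping'' you anticipate for the modular group $\ZZ_{p^2}\rtimes\ZZ_p$ is superfluous.
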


\begin{proof}
	We may assume that $X$ contains as a subgraph the graph 
	$Y = Cay (G,\{a,b\}$ where $a$ and $b$ are noncommuting elements 
	of $G$ and its commutator $c =[a,b]$ is a central element. In fact,
	$[G,G] =Z(G)$.
	Since $\langle c\rangle$ is generated by $a$ and $b$, the subgraph $\tilde X(c)$
	is connected and so the quotient $X_{c}$, a Cayley graph of an abelian  group of odd order $p^2$, lifts to a connected graph. In particular there must be a cycle $C$  in $X_{c}$ which lifts to a full cycle of length $p|C|$ in $X$. Besides by Proposition~\ref{pro:alspach} the corresponding cycle and Hamilton spaces coincide and so $C$ must be a sum of Hamilton cycles
	in $X_{c}$. It follows that at least one of these Hamilton cycles lifts to a (full) Hamilton cycle in $X$, as required. 
\end{proof}

This naturally leads us to  the product of two distinct primes as a next 
step in revisiting the above  constructions of Hamilton cycles.
Following \cite[Theorem~2.1]{DM94}, vertex-transitive graphs of order $pq$, $p > q$ primes, fall into three classes, depending on 
(im)primitivity of their automorphism group.
While one class  is made of graphs characterized by the fact that the full automorphism group as well as all of its transitive subgroups are primitive, the other two classes  are made of  graphs  with automorphism groups containing a transitive subgroup acting imprimitively on the set of vertices.
The first class of these two classes consists of $(q,p)$-metacirculants,  where the imprimitive subgroup  is  the normalizer of a Sylow $p$-subgroup 
and the blocks coincide with the orbits of this Sylow $p$-subgroup.
The graphs belonging to second class only exist when both $p$ and $q$ are Fermat primes. Here the  automorphism group contains an imprimitive subgroup of blocks of size $q$ but no such subgroup with blocks of size $p$. 

Hereafter, we deal with the class of metacirculants of order $pq$
and leave the other two classes to the sequel of this paper.
It follows from \cite{GMM22}  that a Cayley graph of an abelian group  has trivial Hamilton compression if and only if it is (a circulant) of odd square free order with a canonical generating set.  
We therefore have that  the Hamilton compression of  
a circulant ${\rm Cay} (\ZZ_{pq},S)$  is 
$pq$ when $S$ contains a generator of $\ZZ_{pq}$ and is $1$ 
otherwise. We may therefore restrict ourselves to
$(q,p)$-metacirculants arising from a transitive action of a non-trivial
{(i.e., non-isomorphic to the direct product)}
semidirect product  $\ZZ_p\rtimes\ZZ_m$,
where $m$ can be chosen to be a power of $q$.

For such a $(q,p)$-metacirculant $X$, let $\rho$ be a corresponding $(q,p)$-semiregular automorphism  (with a transitive normalizer) and let $P$ be a Sylow $p$-subgroup $P$ containing $\rho$.
Since $q<p$ the orbits of $P$ coincide with the orbits of $\rho$
(and of any $(q,p)$-semiregular element of $P$). We may therefore define
the graph $\tilde X(P)$ to be the graph  $\tilde X(\rho)$.
Also, since any two Sylow $p$-subgroups are conjugate
the graph $\tilde X(P)$ is independent of  the choice of a Sylow $p$-subgroup, and we may therefore 
simplify the notation to $\tilde X$. 

We are now ready to characterize Hamilton compressions of $(q,p)$-metacirculants for $q < p$ primes.

%
%

\begin{theorem}
	\label{the:metapq}
	Let $X$ be a $(q,p)$-metacirculant, where $q < p$ are primes.
	Then  the Hamilton compression of $X$ is as follows:
	\begin{enumerate}[(i)]
		\itemsep=0pt
		\item $\kappa(X) = 0$ if $X$ is isomorphic to the Petersen graph;
		\item $\kappa(X) = 1$ if $X$ is a non-Cayley graph and $Y_{X}$ is a disconnected graph;
		\item $\kappa(X) = q$ if $X$ is a Cayley graph and $Y_{X}$ is a disconnected graph; 
		\item $\kappa(X) = p$ if $X$ is not a Cayley graph of both a dihedral and a cyclic group of order $2p$ and $Y_{X}$ is a connected graph;
		\item $\kappa(X) = 2p$ if $X$ is a Cayley graph of both a dihedral and a cyclic group of order $2p$ and $Y_{X}$ is a connected graph.
	\end{enumerate}
\end{theorem}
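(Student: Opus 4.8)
The plan is to reduce $\kappa(X)$ to a short list of candidates and then pin down each case with the Lifting Cycle Technique together with the structural dichotomy of \cite{DM94}. Since $|V(X)|=pq$, every compression factor divides $pq$, so $\kappa(X)\in\{0,1,q,p,pq\}$, and note $pq=2p$ exactly when $q=2$. By the hamiltonicity of vertex-transitive graphs of order $pq$ \cite{DKM21}, the Petersen graph is the only non-hamiltonian instance, giving $\kappa(X)=0$ and case (i); from now on $X$ is hamiltonian and $\kappa(X)\geq1$. By \cite{DM94} either $\Aut(X)$ is primitive, in which case $X$ is one of finitely many graphs (the Petersen graph and its complement, and the three basic orbital graphs of $\PSL(2,19)$ on cosets of $A_5$ and their complements), each disposed of by the direct computations already recorded in the proof of Theorem~\ref{the:YZ} (all yielding $\kappa=p$, i.e. case (iv)); or $\Aut(X)$ is $p$-imprimitive, in which case $P=\la\rho\ra$ is the unique (normal) Sylow $p$-subgroup and its orbits are the $q$ blocks. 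I assume the latter throughout.

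The crucial tool is the equivalence: $p\mid\kappa(X)$ if and only if $Y_X$ (that is, $\tilde X$) is connected. For the forward direction, a Hamilton cycle $C$ with $p\mid\kappa(X,C)$ is invariant under a semiregular $\pi$ of order $p$; being a $p$-element, $\pi$ lies in $\la\rho\ra$, so its orbits are the rows and the shift it induces on $C$ has length $q$. Hence no two consecutive vertices of $C$ share a $\pi$-orbit, every edge of $C$ joins distinct rows, and since $C$ spans $X$ this forces $Y_X=\tilde X(\pi)$ to be connected. For the converse I would run the Lifting Cycle Technique on $\rho$ as in Proposition~\ref{pro:twist}: the quotient $X/\rho$ is a connected graph on the $q$ rows, and connectivity of $Y_X$ guarantees a Hamilton cycle of $X/\rho$ whose voltages generate $\ZZ_p$ (this is where the double-edge argument of Proposition~\ref{pro:twist} reappears), so that the cycle lifts and $\kappa(X)\geq p$.

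With this equivalence the two ``$Y_X$ disconnected'' cases follow by controlling the factor $q$, where $\kappa(X)\in\{1,q\}$. A factor $q$ is realized by a semiregular $\beta$ of order $q$; since $\la\rho\ra$ is normal, $\beta$ normalizes it and, being semiregular of order $q$, permutes the $q$ rows in a single $q$-cycle, so $\la\rho,\beta\ra$ is transitive of order $pq$, hence regular, and $X$ is a Cayley graph. Thus a non-Cayley $X$ with $Y_X$ disconnected admits no factor $q$, giving $\kappa(X)=1$ (case (ii)). Conversely, if $X$ is Cayley, a complement $\beta$ of $\la\rho\ra$ in a regular subgroup $\ZZ_p\rtimes\ZZ_q$ is semiregular of order $q$; quotienting by $\beta$ yields a connected circulant on $p$ vertices whose generator Hamilton cycle lifts by the same voltage argument, so $\kappa(X)\geq q$ and hence $\kappa(X)=q$ (case (iii)).

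Finally, when $Y_X$ is connected we have $\kappa(X)\geq p$, so $\kappa(X)\in\{p,pq\}$. A factor $pq$ is realized only by a cyclic group $\ZZ_{pq}$ acting by rotation on a Hamilton cycle, equivalently by a cyclic regular subgroup of $\Aut(X)$; its order-$q$ generator centralizes $\la\rho\ra$, making $X$ a circulant with a generator in its connection set. For $q\geq3$ this cannot happen for a graph arising from a \emph{non-trivial} semidirect product $\ZZ_p\rtimes\ZZ_q$ (pure circulants having been separated off before the theorem), so $\kappa(X)=p$; since $pq\neq2p$ here, such $X$ is vacuously not a Cayley graph of order $2p$ and lands in case (iv). When $q=2$ we have $pq=2p$, the metacirculant is a Cayley graph of $D_p$, and the centralizing involution exists precisely when $X$ is \emph{also} a circulant, i.e. a Cayley graph of $\ZZ_{2p}$; hence $\kappa(X)=2p$ exactly when $X$ is a Cayley graph of both $D_p$ and $\ZZ_{2p}$ (case (v)) and $\kappa(X)=p$ otherwise (case (iv)). The main obstacle I anticipate is twofold: verifying that the quotient Hamilton cycles genuinely lift (the voltage/double-edge bookkeeping of Proposition~\ref{pro:twist}, now for general connection sets), and making airtight the circulant characterization that separates $\kappa=p$ from $\kappa=2p$ in the $q=2$ case.
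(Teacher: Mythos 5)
Your overall architecture matches the paper's (reduce to $\kappa(X)\in\{0,1,q,p,pq\}$ by divisibility, split on connectedness of $Y_X=\tilde X$, use normality of $\la\rho\ra$ to tie the factor $q$ to Cayley-ness), and in two places you are actually more careful than the paper: you treat the primitive graphs of \cite{DM94} explicitly, and you spell out why a semiregular automorphism of order $q$ forces $X$ to be a Cayley graph (the group $\la\rho,\beta\ra$ is transitive of order $pq$, hence regular). Your forward direction ($p\mid\kappa(X)$ implies $Y_X$ connected) is also sound.

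However, there is a genuine gap at the central step, and it is exactly the one you flag as your ``main obstacle'': the converse implication that connectivity of $Y_X$ yields $\kappa(X)\ge p$, and likewise the lifting that gives $\kappa(X)\ge q$ in case (iii). Connectivity of $Y_X$ only guarantees that \emph{some} cycle of the quotient $X_\rho$ has nonzero voltage in $\ZZ_p$; what is needed is a \emph{Hamilton} cycle of $X_\rho$ with nonzero voltage. The double-edge bookkeeping of Proposition~\ref{pro:twist} cannot be ``redone for general connection sets'': in that proposition the quotient (by $\sigma$) was itself a single cycle, so the Hamilton cycle came for free and only its voltages needed checking, whereas here the quotient by $\rho$ is a circulant of order $q$ with many cycles and no canonical one. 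The paper bridges this gap with Proposition~\ref{pro:alspach} (Alspach--Locke--Witte): since $X_\rho$ is a connected Cayley graph of an abelian group of odd order, its cycle space equals its Hamilton space, so the nonzero-voltage cycle is a sum of Hamilton cycles, at least one of which must carry nonzero voltage and therefore lifts to a $p$-symmetric Hamilton cycle of $X$ (the case $q=2$ being handled separately via the $[S,S',T]$ representation). The same tool is what your case (iii) needs: your claim that the ``generator Hamilton cycle'' of the order-$p$ circulant quotient $X_\beta$ lifts is incorrect as stated, since that particular cycle may well have zero voltage in $\ZZ_q$; one must again invoke Proposition~\ref{pro:alspach} to produce a Hamilton cycle of $X_\beta$ with nonzero voltage (the paper instead handles (iii) by exhibiting a subgraph $X(q,p;r)$ and citing Theorem~\ref{the:family}). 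Without this cycle-space ingredient, the implications ``$Y_X$ connected $\Rightarrow\kappa\ge p$'' and ``Cayley $\Rightarrow\kappa\ge q$'' remain unproved, so the proposal as written does not close.
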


\begin{proof}
	Identifying $V(X)$ with $V$ from \eqref{eq:vertex} we have that  the two permutations $\rho$ and $\sigma$, given in  \eqref{eq:rho} and \eqref{eq:sigma} with $m=q$ and $n=p$, respectively, 
	give rise to the semidirect product acting transitively 
	on the vertex set $V(X)$.
	We know that, with the exception of the Petersen graph, 
	$(q,p)$-metacirculants are hamiltonian 
	(see \cite{AP82,AP82-2,DM88})
	and so $\kappa(X) \geq 1$. 
	We distinguish two cases depending 
	on connectedness of $\tilde X$. Let $P$ be a Sylow $p$-subgroup
	containing $\rho$.

	\smallskip
	{\sc Case 1.} $\tilde X$ is a disconnected graph.
	
	\smallskip
	Then clearly  $\kappa(X) \neq p,pq$.
	Moreover, the orbits of $P$ are not independent sets.
	Let us first deal with the case $q=2$. Then  $\tilde X$ is a perfect matching.  Moreover, if $X$ is a non-Cayley graph then we must have  
	$p\equiv {1\pmod 4}$, in which case $\Aut (X)$ contains  no semiregular involution and so $\kappa(X) =1$. If, on the other hand, $X$ is a Cayley graph then it is a Cayley graph of a dihedral group $D_{2p}$, and so it contains the $p$-prism as a subgraph and so  
	$\kappa(X) =2$.
	
	Suppose now that $q$ is odd.
	Since the orbits of $P$ are not independent sets,
	$X$ must necessarily contain a subgraph isomorphic to
	$X(q,p,r)$ for an appropriate $r \in \ZZ_p^*$, 
	when $X$ is a Cayley graph. By  Theorem~\ref{the:family} we have that $\kappa(X) = q$. When $X$ is a non-Cayley graph, then it admits no  semiregular automorphism  of order $q$  and so $\kappa(X) = 1$.
	
	\smallskip
	{\sc Case 2.} $\tilde X$ is a connected graph.
	
	\smallskip
	Suppose  first that $q=2$. Then $X$  can be represented by a triple
	of the form $[S,S',T]$ which defines the adjacencies  in $X$.
	The sets $S$ and $S'$ are subsets of $\ZZ_p^*$,
	and $T$ is a subset of $\ZZ_p$, and we have that
	the neighbours of $v_0^j$ are all vertices of the 
	form $v_0^{j+s}$, $s \in S$
	and all vertices of the form $v_1^{j+t}$, $t \in T$, and similarly
	the neighbours of $v_1^j$ are all vertices of the form 
	$v_1^{j+s'}$, $s' \in S'$,
	and all vertices of the form $v_0^{j-t}$, $t \in T$.
	It follows that $X$ is simultaneously a Cayley graph of $D_{2p}$ and $\ZZ_{2p}$ precisely when $S'=S$ and $T =-T$ can be chosen as a symmetric set. In this case, since $\tilde X$ is connected,
	a $2p$-symmetric Hamilton cycle can be constructed solely from the edges in $\tilde X$. Hence $\kappa(X) =2p$ in this case.
	No such cycle can be constructed when
	$X$ is either a non-Cayley graph or a Cayley graph of $D_{2p}$ but not of $\ZZ_{2p}$, and so $\kappa(X) = p$.
	
	Suppose now that $q$ is odd.
	Consider the quotient $X_{\rho}$. Being a circulant of order $q$, it follows by Proposition~\ref{pro:alspach} that its cycle  and Hamilton spaces coincide.
	Consequently, every cycle in $X_{\rho}$ is a sum of Hamilton cycles. Now, the fact that $\tilde X$ is connected implies that there must exist a cycle say $C$ in $X_{\rho}$ with  non-zero voltage in $\ZZ_p$. Writing $C$ as a  sum of Hamilton cycles it is clear that at least one of these cycles must have a non-zero voltage, and this cycle lifts to a full Hamilton cycle in $X$, a Hamilton cycle that admits a $p$-fold rotational symmetry. Hence $\kappa(X) =p$, as required.
\end{proof}

In the computations of compression factors of various Hamilton cycles in metacirculants, covered in this paper, one cannot fail to notice that
some of these graphs admit Hamilton cycles with different compression factors. This suggests the following definitions. Let the 
{\em Semiregularity Array} ${\rm Sem}(X)$ of a 
(vertex-transitive) graph $X$ be the 
array  $[n_1,n_2,\dots n_r]$ of positive integers arranged in ascending order with the property that $X$ admits a semiregular automorphism of order $n_i$ for each $i\in  \{1,2,\dots,r\}$.
Similarly, the {\em Hamilton Compression Array} ${\rm Ham}(X)$ of a 
(vertex-transitive) graph $X$ is the 
array  $[k_1,k_2,\dots k_t]$ of positive integers in ascending order
such that  for every $i\in  \{1,2,\dots,t\}$, we have that $X$ admits a Hamilton cycle $H_i$ with compression factor $k_i$.

Clearly ${\rm Ham}(X)$ is a subarray of ${\rm Sem}(X)$
whenever $X$ is hamiltonian.
For example  for the Petersen graph $GP(5,2)$ we have that
${\rm Sem}(X) =[1,5]$, while 
${\rm Ham}(X) = [0]$. On the other hand  the two arrays coincide 
for the complement of the Petersen graph.
It seems that vertex-transitive graphs for which the two arrays coincide deserve  special interest. We will call them \emph{ubiquitously compressible}.
While every non-Cayley $(q,p)$-metacirculant
$X$ for which the subgraph $\tilde X$ is connected is ubiquitously compressible, 
the Cayley graphs $X(m,n,r)$ are not.
We suggest the following problem.

\begin{problem}
	\label{prob:}
	\begin{enumerate}[(a)]
		\item Find  new families of ubiquitously compressible vertex-transitive graphs.
		\item Is there an insightful structural result about ubiquitously compressible
		vertex-transitive graphs?  
	\end{enumerate}
	
\end{problem}

\end{document}